\documentclass[12pt, reqno]{amsart}
\setcounter{secnumdepth}{5}
\usepackage[T1]{fontenc}
\usepackage[latin1]{inputenc}
\usepackage{typearea}
\usepackage{geometry}
\usepackage{ulem}
\usepackage{amsmath}
\usepackage{amsfonts}
\usepackage{amssymb}
\usepackage{latexsym}
\usepackage{enumerate}
\usepackage{verbatim}
\usepackage{amsthm}
\usepackage[all]{xy}
\usepackage{hhline}
\usepackage{epsf} 
\usepackage{cite}
\usepackage{mathrsfs}
\setlength{\textheight}{660pt}

\numberwithin{equation}{section}

\usepackage{color}
\usepackage{dsfont}

\renewcommand{\1}{\mathds{1}}

\newcommand{\Wbar}{\overline{W}}

\newtheorem{theorem}{{\sc Theorem}}[section]
\newtheorem{lemma}[theorem]{{\sc Lemma}}
\newtheorem{prop}[theorem]{{\sc Proposition}}

\theoremstyle{remark}
\newtheorem{remark}[theorem]{{\sc Remark}}

\theoremstyle{definition}

%
%
\newcommand{\R}{\mathbb{R} }

\newcommand{\mfk}{\mathfrak{f}}

\newcommand{\N}{\mathbb{N} }

\newcommand{\F}{\mathcal{F}}

\newcommand{\D}{\mathcal{D}}
\newcommand{\W}{\mathcal{W}}

\newcommand{\calZ}{\mathcal{Z}}
\newcommand{\scrZ}{\mathscr{Z}}

\newcommand{\Prob}{\mathbb{P}}
\newcommand{\E}{\mathbb{E}}

\providecommand{\abs}[1]{\lvert #1\rvert}
\providecommand{\babs}[1]{\bigl\lvert #1\bigr\rvert}

\providecommand{\norm}[1]{\lVert #1\rVert}

\providecommand{\Enorm}[1]{\lVert #1\rVert_2}

\DeclareMathOperator{\Var}{Var}

\DeclareMathOperator{\dom}{dom}

%
%

\renewcommand{\phi}{\varphi}
\renewcommand{\epsilon}{\varepsilon}

\renewcommand{\rho}{\varrho}
\renewcommand{\P}{\mathbb{P}}

\begin{document}

\title[Fourth moment theorems and product formulae]{ Fourth moment theorems on the Poisson space:\\ analytic statements via product formulae  }

\author{  Christian D\"obler \and Giovanni Peccati}
\thanks{\noindent Universit\'{e} du Luxembourg, Unit\'{e} de Recherche en Math\'{e}matiques \\
E-mails: christian.doebler@uni.lu, giovanni.peccati@gmail.com\\
{\it Keywords}: Multiple Wiener-It\^{o} integrals; Poisson functionals; product formula; fourth moment theorem; carr\'{e}-du-champ operator; Berry-Esseen bounds; Gaussian approximation; Malliavin calculus; Stein's method\\
{\it AMS 2000 Classification}: 60F05; 60H07; 60H05}
\begin{abstract} { We prove necessary and sufficient conditions for the asymptotic normality of multiple integrals with respect to a Poisson measure on a general measure space, expressed both in terms of norms of contraction kernels and of variances of {\it carr\'e-du-champ operators}. Our results substantially complete the fourth moment theorems recently obtained by D\"obler and Peccati (2018) and D\"obler, Vidotto and Zheng (2018). An important tool for achieving our goals is a novel product formula for multiple integrals under minimal conditions.}

\end{abstract} 

\maketitle

\section{Introduction}\label{intro}
\subsection{Overview} {A ``fourth moment theorem'' (FMT) is a probabilistic statement, implying that a certain sequence of (centred and normalised) random variables verifies a central limit theorem (CLT) as soon as the sequence of their fourth moments converges to 3, that is, to the fourth moment of the one-dimensional standard Gaussian distribution. Well-known examples of FMTs are {\it de Jong-type theorems} for degenerate $U$-statistics \cite{DP16}, as well as the class of CLTs for multiple stochastic integrals with respect to Gaussian fields discussed e.g. in \cite[Chapter 5]{NouPecbook}. The aim of the present paper is to provide an analytical study of FMTs for sequences of multiple stochastic integrals with respect to a general Poisson random measure.}

Our starting point will be the recent paper \cite{DP18a}, where we proved quantitative FMTs for multiple Wiener-It\^{o} integrals 
with respect to a general Poisson random measure {under mild regularity assumptions for the involved random variables}. Such regularity assumptions were later removed in \cite{DVZ18}, where the above mentioned fourth moment results were also extended to the multivariate case. 
\smallskip

The common scheme in the proofs of the main results in both papers \cite{DP18a, DVZ18} was { the implementation} of the so-called \textit{spectral viewpoint}, initiated in the seminal paper \cite{Led12} in the framework of invariant measures of a Markov diffusion generator $L$. { In particular, the two references \cite{DP18a, DVZ18} amply demonstrated that the use of \textit{carr\'e-du-champ operators} continues to be the right tool for deriving effective quantitative fourth moment bounds, even when dealing with the {\it non-diffusive} Ornstein-Uhlenbeck generator $L$ defined on configuration spaces (see the discussion below for precise definitions).}

\smallskip

{ Remarkably, the use of {\it carr\'e-du-champ} operators also entails} that neither the results of \cite{DP18a} nor those of \cite{DVZ18} rely on the classical product formula for multiple integrals on the Poisson space (see e.g. \cite[Proposition 5]{Lastsv}), whereas product formulae on Gaussian spaces are pivotal in the original proof of the FMT for multiple integrals with respect to an isonormal Gaussian process -- see e.g. in \cite[Chapter 5]{NouPecbook}. { In particular, neither the proofs nor the statements contained in \cite{DP18a, DVZ18} exploited the properties of \textit{contraction operators}, naturally emerging in connection with multiple stochastic integrals whenever one deals with the computation of higher moments. }

\smallskip

{ It is important to notice that, in the Gaussian situation (see e.g. 
\cite[Theorem 5.2.7]{NouPecbook} and the discussion therein) and differently from what is currently known on the Poisson space, the conditions for the asymptotic normality of a sequence of multiple integrals belonging to a fixed chaos can be phrased equivalently in terms of fourth moments, of norms of contraction kernels, and of variances of squared norms of Malliavin derivatives. As one can deduce by inspecting the hundreds of references listed on the webpage \cite{Web}, conditions expressed in terms of contraction kernels are very often (and by far) the easiest to check and the most amenable to analysis, whereas the exact computation of fourth moments conditions often involves a number of combinatorial difficulties.}
\smallskip

The main objective of the present paper is to complement the fourth moment conditions from \cite{DP18a, DVZ18} with conditions expressed in terms of norms of contractions and in terms of variances of carr\'e du champ operators, { with the precise aim of making the results of \cite{DP18a, DVZ18} even more amenable to analysis}. Under mild { uniform} integrability requirements, we will show that all these conditions are equivalent to the asymptotic normality of the sequence of multiple integrals. Our main findings appear below in Theorem \ref{fmt} (one-dimensional case) and Theorem \ref{mfmt} (multi-dimensional case). 

\smallskip

In order to establish our results, we will need to review the product formula for multiple Wiener- It\^{o} integrals on the Poisson space. In its standard form (see e.g. \cite[Proposition 5]{Lastsv}), such a formula needs additional $L^2$-integrability assumptions on some family of contraction kernels. { In order to be able to work in full generality, we will prove below a new general product formula that, under the { minimal} assumptions of the FMT of \cite{DVZ18}, ensures that certain linear combinations of contraction kernels are automatically in the corresponding $L^2$ spaces. On the other hand, by proving also a converse statement, we obtain a new test criterion, ensuring that the product of two multiple integrals is in $L^2$. Such a criterion is only expressed in terms of linear combinations of contractions kernels. All our new findings on this matter are contained in Theorem \ref{prodform}. }

\smallskip

For the rest of the paper, every random object is defined on a common probability space $(\Omega, \mathscr{F}, \P)$, with $\E$ denoting expectation with respect to $\P$. We use the symbol $\stackrel{\D}{\longrightarrow}$ in order to indicate convergence in distribution of random variables.

\section{A new general product formula on the Poisson space}
We start by providing a rough description of the technical setup, that is needed in order to state and understand our main results. 
We refer to Section \ref{s:generalpoisson}, as well as to \cite{Lastsv}, for precise definitions and more detailed discussions.
\smallskip

Let us fix an arbitrary measurable space $(\mathcal{Z},\mathscr{Z})$ endowed with a $\sigma$-finite measure $\mu$. Furthermore, we let 
\[\mathscr{Z}_\mu:=\{B\in\mathscr{Z}\,:\,\mu(B)<\infty\}\]
and denote by 
\begin{equation*}
\eta=\{\eta(B)\,:\,B\in{ \mathscr{Z}}\}
\end{equation*}
a \textit{Poisson random measure} on $(\mathcal{Z},\mathscr{Z})$ with \textit{control} $\mu$, defined on a suitable probability space $(\Omega,\F,\Prob)$. We recall that this means that: (i) for each finite sequence $B_1,\dotsc,B_m\in\mathscr{Z}$ of pairwise disjoint sets, the random variables $\eta(B_1),\dotsc,\eta(B_m)$ are independent, and (ii) that for every $B\in\mathscr{Z}$, the random variable $\eta(B)$ has the Poisson distribution with mean $\mu(B)$.
Here, we have extended the family of Poisson distributions to the parameter region $[0,+\infty]$ in the usual way. 
For $B\in\mathscr{Z}_\mu$, we also write 
$\hat{\eta}(B):=\eta(B)-\mu(B)$ and denote by 
\[\hat{\eta}=\{\hat{\eta}(B)\,:\,B\in\mathscr{Z}_\mu\}\]
the \textit{compensated Poisson measure} associated with $\eta$. Throughout this paper we will assume that $\F=\sigma(\eta)$. In order to state our main results, we have to briefly recall the following objects, { arising in the context of stochastic analysis for Poisson measures}. By $L$, we denote the generator of the \textit{Ornstein-Uhlenbeck semigroup} associated with $\eta$, and by $\dom L\subseteq L^2(\P)$ we denote its domain. It is well-known that $-L$ is a symmetric, diagonalizable operator on $L^2(\P)$ which has has the pure point spectrum 
$\N_0=\{0,1,\dotsc\}$. Closely connected to $L$ is the symmetric, bilinear and nonnegative \textit{carr\'e du champ operator} $\Gamma$, which is defined by 
\[\Gamma(F,G)=\frac12\Bigl(L(FG)-FLG-GLF\Bigr)\,,\]
for all $F,G\in \dom L$ such that also $FG\in\dom L$.
 For $p\in\N_0$ we denote by $C_p:=\ker(L+p{\rm Id})$ the so-called \textit{$p$-th Wiener chaos} { associated with }$\eta$. Here, we denote by ${\rm Id}$ the identity operator on $L^2(\P)$. It is a well-known fact that, for $p\in\N$, the linear space $C_p$ coincides with the collection of \textit{multiple Wiener-It\^{o} integrals} $I_p(f)$ of order $p$ with respect to ${ \hat{\eta}}$. Here, $f\in L^2(\mu^p)$ is a square-integrable function on the product space 
$(\mathcal{Z}^p,\mathscr{Z}^{\otimes p}, \mu^p)$. Moreover, for a constant 
$c\in\R$ we let $I_0(c):=c$. Then, since the kernel of $L$ coincides with the constant random variables, we also have 
$C_0=\{I_0(c)\,:\, c\in\R\}$. Multiple integrals have the following two fundamental properties.
Let $p,q\geq0$ be integers: then,
\begin{enumerate}[1)]
 \item $I_p(f)=I_p(\tilde{f})$, where $\tilde{f}$ denotes the \textit{canonical symmetrization} of $f\in L^2(\mu^p)$, i.e., with $\mathbb{S}_p$ the symmetric group acting on $\{1,\dotsc,p\}$, we have
 \[\tilde{f}(z_1,\dotsc,z_p)=\frac{1}{p!}\sum_{\pi\in\mathbb{S}_p} f(z_{\pi(1)},\dotsc,z_{\pi(p)})\,;\]
 \item $I_p(f)\in L^2(\Prob)$, and $\E\bigl[I_p(f)I_q(g)\bigr]= \delta_{p,q}\,p!\,\langle \tilde{f},\tilde{g}\rangle $, where $\delta_{p,q}$ denotes {Kronecker's delta symbol}.
\end{enumerate}
{ The Hilbert subspace of $L^2(\mu^p)$ composed of $\mu^p$-a.e. symmetric kernes will henceforth be denoted by $L_s^2(\mu^p)$}. It is a crucial fact that every $F\in L^2(\Prob)$ admits 
a unique representation 
\begin{equation}\label{chaosdec}
 F=\E[F]+\sum_{p=1}^\infty I_p(f_p)\,,
\end{equation}
where $f_p\in L_s^2(\mu^p)$, $p\geq1$, are suitable symmetric kernel functions, and the series converges in $L^2(\Prob)$. Identity \eqref{chaosdec} is referred to as the \textit{chaotic decomposition} of the functional $F\in L^2(\Prob)$. Hence, 
multiple integrals are in a way the basic building blocks of the space $L^2(\P)$. Note that \eqref{chaosdec} can equivalently be written as 
\begin{equation*}
L^2(\P)=\bigoplus_{p=0}^\infty C_p\,,
\end{equation*}
where the sum on the right hand side is furthermore orthogonal.

\smallskip

The following analytic notion of a \textit{contraction kernel} will also be crucial for the statements of our results. Fix integers $p,q\geq 1$ as well as symmetric kernels $f\in L_s^2(\mu^p)$ and $g\in L_s^2(\mu^q)$. For integers 
$1\leq l\leq r\leq p\wedge q$ we define the \textit{contraction kernel} $f\star_l^r g$ on $\mathcal{Z}^{p+q-r-l}$ by 
\begin{align*}
 &(f\star^l_r g)(y_1,\dotsc,y_{r-l},t_1,\dotsc,t_{p-r}, s_1,\dotsc,s_{q-r})\\
 &:=\int_{\mathcal{Z}^l}\Bigl( f\bigl(x_1,\dotsc,x_l, y_1,\dotsc,y_{r-l},t_1,\dotsc,t_{p-r}\bigr)\\
 &\hspace{3cm}\cdot g\bigl(x_1,\dotsc,x_l,y_1,\dotsc,y_{r-l},s_1,\dotsc,s_{q-r}\bigr)\Bigr)d\mu^l(x_1,\dotsc,x_l)\,.
\end{align*}
If $l=0$ and $0\leq r\leq p\wedge q$, then we let 
\begin{align*}
 &(f\star^0_r g)(y_1,\dotsc,y_{r},t_1,\dotsc,t_{p-r}, s_1,\dotsc,s_{q-r})\\
 &:=f(y_1,\dotsc,y_{r},t_1,\dotsc,t_{p-r})\cdot g(y_1,\dotsc,y_{r}, s_1,\dotsc,s_{q-r})\,.
\end{align*}
In particular, if $l=r=0$, then $f\star_0^0g=f\otimes g$ reduces to the \textit{tensor product} of $f$ and $g$, given by 
\begin{equation*}
 (f\otimes g)(t_1,\dotsc,t_p,s_1,\dotsc,s_q):=f(t_1,\dotsc,t_p)\cdot g(s_1,\dotsc,s_q)\,.
\end{equation*}
More generally, whenever $0\leq l=r\leq p\wedge q$, then on customarily writes $f\otimes_r g$ for $f\star_r^r g$. In this case, a simple application of the Cauchy-Schwarz inequality shows that $f\otimes_r g\in L^2(\mu^{p+q-2r})$. This however, does not hold for 
$l< r$, in general. For instance, we have $f\star_p^0 f=f^2$, which is in $L^2(\mu^{p})$ if and only if $f\in L^4(\mu^p)$.  
As shown in our paper \cite{DP18b},
the contraction kernel $f\star_l^r g$ is always $\mu^{p+q-r-l}$-a.e. well-defined as a function on $\mathcal{Z}^{p+q-r-l}$.

Contraction kernels play a major role in this article because they naturally arise in the following classical product formula on the Poisson space that is taken from \cite{Lastsv}. It was first proved under less general conditions in \cite{Sur}.

\begin{prop}[Classical product formula]\label{prodformcl}
 Let $p,q\geq 1$ be integers and assume that $f\in L^2_s(\mu^p)$ and $g\in L^2_s(\mu^q)$. If, for all integers $0\leq r\leq p\wedge q$ and $0\leq l\leq r-1$ one has that $f\star_r^l g\in L^2(\mu^{p+q-r-l})$, then 
 \begin{equation}\label{pfcl}
  I_p(f) I_q(g)=\sum_{r=0}^{p\wedge q} r!\binom{p}{r}\binom{q}{r}\sum_{l=0}^r\binom{r}{l} I_{p+q-r-l}\bigl(\widetilde{f\star_r^l g}\bigr)\,.
 \end{equation}
\end{prop}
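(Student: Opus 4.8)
The plan is to prove \eqref{pfcl} first for a dense class of \emph{elementary} kernels, for which both sides are finite linear combinations of products of independent random variables and the identity reduces to finite combinatorics, and then to remove the restriction by an approximation argument that crucially exploits the standing $L^2$-assumption on the contractions $f\star_r^l g$ with $l<r$.

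\textbf{Elementary kernels.} Fix $A\in\mathscr{Z}_\mu$ together with a finite measurable partition $A=A_1\cup\dots\cup A_M$ into pairwise disjoint sets, and consider kernels $f$ and $g$ that are symmetrizations of finite linear combinations of indicators of products $A_{i_1}\times\dots\times A_{i_p}$ and $A_{j_1}\times\dots\times A_{j_q}$. Such kernels are dense in $L_s^2(\mu^p)$, respectively $L_s^2(\mu^q)$, as $A$ and the partition vary. By multilinearity and property 1) of multiple integrals it suffices to treat $f=\widetilde{\1_{A_{i_1}\times\dots\times A_{i_p}}}$ with $i_1,\dots,i_p$ pairwise distinct, and similarly for $g$; then the relevant kernels vanish on all diagonals, so $I_p(f)=\hat\eta(A_{i_1})\cdots\hat\eta(A_{i_p})$ and $I_q(g)=\hat\eta(A_{j_1})\cdots\hat\eta(A_{j_q})$. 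One multiplies these two products and reorganizes them by iterating the single-block identity
\[
\hat\eta(A_k)^2=\mu(A_k)+\hat\eta(A_k)+I_2\bigl(\1_{A_k}^{\otimes 2}\bigr),
\]
which is the $p=q=1$ instance of \eqref{pfcl}, the last summand being the genuine second-chaos contribution. The coefficient $r!\binom pr\binom qr\binom rl$ then emerges by counting: one selects an $r$-subset of the $p$ slots of $f$, an $r$-subset of the $q$ slots of $g$, and a bijection between them, i.e.\ which $r$ pairs of blocks are identified ($r!\binom pr\binom qr$ possibilities); among these $r$ identified pairs one chooses the $l$ that are integrated out, each contributing a factor $\mu(A_k)$, the remaining $r-l$ pairs staying on the diagonal and keeping a factor $\hat\eta(A_k)$ ($\binom rl$ possibilities). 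The canonical symmetrization on the right-hand side reflects that the labelling of the slots is immaterial. In this elementary regime every contraction $f\star_r^l g$ is bounded with support of finite measure, hence square-integrable, so \eqref{pfcl} is now a direct combinatorial verification.

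\textbf{Approximation.} For general $f\in L_s^2(\mu^p)$ and $g\in L_s^2(\mu^q)$ obeying the hypotheses, choose elementary kernels $f_n\to f$ and $g_n\to g$ in $L^2$, for instance by conditioning on the $\sigma$-fields generated by increasingly fine finite partitions, so that each $f_n$ is a conditional expectation of $f$ and interacts well with the $\star$-operations. By property 2) we get $I_p(f_n)\to I_p(f)$ and $I_q(g_n)\to I_q(g)$ in $L^2(\P)$, whence $I_p(f_n)I_q(g_n)\to I_p(f)I_q(g)$ in $L^1(\P)$. On the right-hand side of \eqref{pfcl}, the terms with $l=r$ (including the tensor term $r=l=0$) are harmless, since $f_n\star_r^r g_n=f_n\otimes_r g_n\to f\otimes_r g$ in $L^2$ by Cauchy--Schwarz. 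For a term with $l<r$ one uses that $f\star_r^l g$ is $\mu^{p+q-r-l}$-a.e.\ well-defined by \cite{DP18b}, that the chosen $f_n,g_n$ force $f_n\star_r^l g_n\to f\star_r^l g$ pointwise a.e., and that the standing assumption $f\star_r^l g\in L^2$ supplies the uniform integrability needed to upgrade this to $L^2$-convergence. Letting $n\to\infty$ in the identity of the previous step, valid for every $n$, yields \eqref{pfcl} in general; since its right-hand side is then a finite sum of elements of $L^2(\P)$, we also obtain $I_p(f)I_q(g)\in L^2(\P)$.

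\textbf{Main obstacle.} The only genuinely delicate point is the $L^2$-convergence $f_n\star_r^l g_n\to f\star_r^l g$ when $l<r$: such contractions are \emph{not} continuous operations with respect to the $L^2$-topologies of $f$ and $g$, and it is precisely this pathology that forces the extra integrability hypothesis here and that the refined Theorem \ref{prodform} will later circumvent by working with suitable linear combinations of contraction kernels rather than with individual ones. Everything else is either the finite combinatorics of the first step or the routine continuity of multiple integrals and of the pure contractions $f\otimes_r g$.
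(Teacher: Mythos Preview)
The paper does not give its own proof of Proposition~\ref{prodformcl}: the result is quoted from \cite{Lastsv} (with an earlier version attributed to \cite{Sur}) and is used only as background for the new Theorem~\ref{prodform}. So there is no in-paper argument to compare your proposal against; what can be assessed is whether your outline stands on its own.

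Your two-step plan (verify \eqref{pfcl} on elementary indicator kernels by pure combinatorics, then pass to the limit) is indeed the classical route, and the first step is fine. The second step, however, has a real gap precisely at the place you yourself flag as ``the only genuinely delicate point''. You assert that for $l<r$ the choice of $f_n,g_n$ as conditional expectations on finite partitions ``force[s] $f_n\star_r^l g_n\to f\star_r^l g$ pointwise a.e.'' and that ``the standing assumption $f\star_r^l g\in L^2$ supplies the uniform integrability needed to upgrade this to $L^2$-convergence''. Neither claim is justified. Conditional expectations do \emph{not} interact well with the $\star$-operation: already for $p=q=1$, $r=1$, $l=0$ one has $f_n\star_1^0 g_n=f_n g_n=\E[f\mid\mathcal{G}_n]\,\E[g\mid\mathcal{G}_n]$, which is not $\E[fg\mid\mathcal{G}_n]$, so there is no martingale structure producing either a.e.\ convergence of the contractions or a dominating function. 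And knowing that the \emph{limit} $f\star_r^l g$ lies in $L^2$ says nothing about uniform integrability of $\bigl(f_n\star_r^l g_n\bigr)^2$; uniform integrability has to come from control of the approximants, not from the target.

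In the standard references the passage to the limit is handled differently: one either (i) proves the formula first when one of the two kernels is simple, using the pathwise (sum-over-configurations) representation of $I_p$, and only then lets the other kernel vary, or (ii) works with nonnegative kernels and invokes a domination $|f_n\star_r^l g_n|\leq |f|\star_r^l|g|$ together with the (often stronger) assumption that $|f|\star_r^l|g|\in L^2$. Your sketch does neither, so as written the approximation step does not close. The combinatorial identification in Step~1 and the convergence of the pure contractions $f_n\otimes_r g_n\to f\otimes_r g$ are correct; the missing ingredient is a genuine mechanism (domination, or a one-sided-simple reduction) that makes the $l<r$ contractions converge in $L^2$.
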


Note that the sum appearing on the right hand side of \eqref{pfcl} is not orthogonal, since it is not arranged according to the orders of the integrals. Introducing the parameter $m=r+l$, satisfying the restrictions 
\[0\leq r\leq m\leq2r\leq2(p\wedge q),\]
we can rewrite \eqref{pfcl} as follows:
\begin{align}\label{pfcl2}
 I_p(f) I_q(g)&=\sum_{r=0}^{p\wedge q} r!\binom{p}{r}\binom{q}{r}\sum_{m=r}^{2r}\binom{r}{m-r} I_{p+q-m}\bigl(\widetilde{f\star_r^{m-r} g}\bigr)\notag\\
 &=\sum_{m=0}^{2(p\wedge q)}\sum_{r=\lceil\frac{m}{2}\rceil}^{m\wedge p\wedge q}r!\binom{p}{r}\binom{q}{r}\binom{r}{m-r}\bigl(\widetilde{f\star_r^{m-r} g}\bigr)\notag\\
 &=\sum_{m=0}^{2(p\wedge q)} I_{p+q-m}\bigl(h_{p+q-m}\bigr)\,,
\end{align}
where the symmetric kernel $h_{p+q-m}$, $0\leq m\leq 2(p\wedge q)$, is given by 
\begin{align}\label{hpqm}
 h_{p+q-m}&=\sum_{r=\lceil\frac{m}{2}\rceil}^{m\wedge p\wedge q}r!\binom{p}{r}\binom{q}{r}\binom{r}{m-r}\bigl(\widetilde{f\star_r^{m-r} g}\bigr)\notag\\
&=\sum_{r=\lceil\frac{m}{2}\rceil}^{m\wedge p\wedge q}\frac{p!q!}{(p-r)!(q-r)!(m-r)!(2r-m)!}\bigl(\widetilde{f\star_r^{m-r} g}\bigr)\,.
\end{align}

From the classical product formula given in Theorem \ref{prodformcl} we can conclude the validity of \eqref{pfcl2} only under the assumption that all the contraction kernels $f\star_l^{r} g$, $0\leq l<r\leq p\wedge q$, be in $L^2$ which implies that also the $h_{p+q-m}$ are in $L^2$. Note also that, under such a restriction, the sum in \eqref{pfcl2} is orthogonal.
 
 \smallskip
 
The first theoretical result of this paper is a new general product formula, showing that \eqref{pfcl} continues indeed to hold under the minimal assumption that $I_p(f) I_q(g)\in L^2(\P)$. In particular, this implies that $h_{p+q-m}$ as given in \eqref{hpqm} is always in 
$L^2(\mu^{p+q-m})$ even though this might not be the case for the individual contractions $f\star_r^{m-r} g$ appearing in the defining equation \eqref{hpqm}. Moreover, we also show that the converse is true as well, i.e. that $I_p(f) I_q(g)\in L^2(\P)$ whenever each kernel $h_{p+q-m}$ is in $L^2(\mu^{p+q-m})$. 

\begin{theorem}[General product formula on the Poisson space]\label{prodform}
 Suppose that $p,q\geq 1$ are integers, select $f\in L^2_s(\mu^p)$ and $g\in L^2_s(\mu^q)$, and define $F:=I_p(f)$, $G:=I_q(g)$. Then, the product $FG$ is in $L^2(\P)$ if and only if, for each $0\leq m\leq 2(p\wedge q)$, the kernel $h_{p+q-m}$ given by \eqref{hpqm} is in $L^2(\mu^{p+q-m})$. In this case, $FG$ has the finite chaotic decomposition 
 \begin{equation}\label{pf1}
  FG=\sum_{m=0}^{2(p\wedge q)} I_{p+q-m}(h_{p+q-m})\,.
 \end{equation}
\end{theorem}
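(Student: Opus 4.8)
The plan is to prove both implications by a truncation/approximation argument that reduces everything to the classical product formula of Proposition \ref{prodformcl}. First I would introduce, for each $N\in\N$, a set $B_N\in\mathscr{Z}_\mu$ with $\mu(B_N)<\infty$ and $B_N\uparrow\mathcal Z$, and set $f_N:=f\,\1_{B_N^p}$, $g_N:=g\,\1_{B_N^q}$. Since $f_N$ and $g_N$ are supported on a set of finite measure and inherit square-integrability, one checks (via Cauchy--Schwarz together with $\mu(B_N)<\infty$, and using the pointwise a.e.\ existence of the contractions established in \cite{DP18b}) that \emph{all} the contraction kernels $f_N\star_r^l g_N$ with $0\le l<r\le p\wedge q$ lie in the appropriate $L^2$ spaces; hence Proposition \ref{prodformcl} applies to the pair $(f_N,g_N)$ and yields $I_p(f_N)I_q(g_N)=\sum_{m=0}^{2(p\wedge q)}I_{p+q-m}(h_{p+q-m}^{(N)})$, where $h_{p+q-m}^{(N)}$ is built from $f_N,g_N$ exactly as in \eqref{hpqm}. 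The key consistency point is that the contractions respect the truncation in a monotone way: because the integrands are products $f\cdot g$ (with nonnegative weights after symmetrization they need not be, so one must be slightly careful), one has $f_N\star_r^{l}g_N\to f\star_r^{l}g$ pointwise $\mu^{p+q-r-l}$-a.e.\ as $N\to\infty$, and similarly $h_{p+q-m}^{(N)}\to h_{p+q-m}$ pointwise a.e.

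For the ``only if'' direction, assume $FG=I_p(f)I_q(g)\in L^2(\P)$. I would show $I_p(f_N)I_q(g_N)\to FG$ in $L^2(\P)$: write $I_p(f)=I_p(f_N)+I_p(f-f_N)$ and expand the product into four terms; the cross terms and the remainder term can be controlled because $f-f_N\to 0$ in $L^2(\mu^p)$ (so $I_p(f-f_N)\to0$ in $L^2(\P)$) and one needs a uniform bound on the $L^2$-norms of the relevant products --- here the natural tool is the hypercontractivity-type control of \cite{DP18a}, or more directly a dominated-convergence argument once one knows $FG\in L^2(\P)$, since $\abs{I_p(f_N)I_q(g_N)}$ is dominated by a fixed integrable random variable up to the usual subtleties. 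Granting the $L^2(\P)$ convergence, the orthogonal chaotic components must converge individually: for each fixed $m$, the projection of $I_p(f_N)I_q(g_N)$ onto $C_{p+q-m}$ equals $I_{p+q-m}(h_{p+q-m}^{(N)})$, and it converges in $L^2(\P)$ to the $(p+q-m)$-th chaotic component of $FG$. By the isometry in property 2), $h_{p+q-m}^{(N)}$ is Cauchy in $L_s^2(\mu^{p+q-m})$, hence converges there to some $\bar h_{p+q-m}$; combined with the pointwise a.e.\ convergence $h_{p+q-m}^{(N)}\to h_{p+q-m}$, we get $\bar h_{p+q-m}=h_{p+q-m}$, so $h_{p+q-m}\in L^2(\mu^{p+q-m})$, and passing to the limit in the truncated product formula gives \eqref{pf1}.

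For the ``if'' direction, suppose each $h_{p+q-m}\in L^2(\mu^{p+q-m})$. Then $\sum_{m=0}^{2(p\wedge q)}I_{p+q-m}(h_{p+q-m})$ is a \emph{finite} sum of genuine multiple integrals, hence an element of $L^2(\P)$; call it $H$. I would again run the truncation: $I_p(f_N)I_q(g_N)=\sum_m I_{p+q-m}(h_{p+q-m}^{(N)})$, and I need $h_{p+q-m}^{(N)}\to h_{p+q-m}$ in $L^2(\mu^{p+q-m})$. The pointwise a.e.\ convergence is already in hand; for the $L^2$ convergence I would invoke a suitable domination --- each contraction $|f_N\star_r^l g_N|$ is bounded pointwise a.e.\ by $|f|\star_r^l|g|$ (taking absolute values inside the integral defining the contraction), and after symmetrization one gets $|h_{p+q-m}^{(N)}|\le \sum_{r}c_{p,q,m,r}\,\widetilde{|f|\star_r^{m-r}|g|}$, a fixed function; if this dominating function is in $L^2$ we are done by dominated convergence, and the hypothesis that $h_{p+q-m}\in L^2$ together with the structure of the sum is exactly what is needed (this is the delicate bookkeeping point, where one must argue that square-integrability of the specific \emph{signed} linear combination $h_{p+q-m}$ propagates to a usable majorant --- possibly by first treating the nonnegative case $f,g\ge0$ and then reducing the general case to it, or by using the a.e.\ well-definedness results of \cite{DP18b} more carefully). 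Granting this, $I_p(f_N)I_q(g_N)\to H$ in $L^2(\P)$; on the other hand $I_p(f_N)\to F$ and $I_q(g_N)\to G$ in $L^2(\P)$, and one upgrades this to $I_p(f_N)I_q(g_N)\to FG$ in probability (or in $L^1$), so $FG=H\in L^2(\P)$ and \eqref{pf1} holds.

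The main obstacle I anticipate is precisely the passage from pointwise a.e.\ convergence of the truncated kernels to $L^2$-convergence in the two directions: in the ``only if'' part one needs the $L^2(\P)$-convergence of the truncated products (which should follow from $FG\in L^2(\P)$ by a monotone/dominated argument on the product of truncations, but requires care because the four-term expansion of $I_p(f)I_q(g)$ mixes truncated and untruncated factors), and in the ``if'' part one needs an $L^2$-majorant for the kernels $h_{p+q-m}^{(N)}$ that is controlled purely by the hypothesis $h_{p+q-m}\in L^2$. Both difficulties are resolved by working with nonnegative kernels first (where the contractions and their truncations are monotone, so the monotone convergence theorem applies directly), and then splitting general $f,g$ into positive and negative parts; the bilinearity of $(f,g)\mapsto f\star_r^l g$ and of $(F,G)\mapsto FG$ reduces the general statement to finitely many nonnegative instances.
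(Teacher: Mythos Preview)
Your truncation strategy has genuine gaps at exactly the places you flag as ``subtleties'', and the paper proceeds by an entirely different, approximation-free route.

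A minor issue first: restricting $f,g$ to $B_N^p,B_N^q$ with $\mu(B_N)<\infty$ does \emph{not} put $f_N\star_r^l g_N$ into $L^2$ for $l<r$. Take $p=q=1$, $\mu$ Lebesgue on $[0,1]$, $f=g=x^{-1/4}$: then $f\in L^2$ but $f\star_1^0 g=x^{-1/2}\notin L^2$. You would additionally have to truncate the \emph{values} of $f,g$; this is repairable. The two real obstructions are not. In the ``only if'' direction you need $I_p(f_N)I_q(g_N)\to FG$ in $L^2(\P)$, but multiple integrals are neither monotone nor positivity-preserving, so no pointwise domination by something built from $FG$ is available; Poisson hypercontractivity does not yield uniform $L^2$ control of $F_NG_N$ from $FG\in L^2$ alone; and the four-term expansion gives only $L^1$ convergence via Cauchy--Schwarz. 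In the ``if'' direction your reduction to nonnegative kernels fails: writing $f=f^+-f^-$, $g=g^+-g^-$, the hypothesis $h_{p+q-m}(f,g)\in L^2$ concerns a \emph{signed} combination in which cancellations can occur, and does not imply $h_{p+q-m}(f^\pm,g^\pm)\in L^2$ for the pieces, so bilinearity does not close the argument.

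The paper avoids approximation altogether. For the forward implication it uses the Last--Penrose formula $h_k(z_1,\dots,z_k)=\frac{1}{k!}\E\bigl[D^{(k)}_{z_1,\dots,z_k}(FG)\bigr]$ directly: iterating the product rule \eqref{prodrule}, $D^{(k)}(FG)$ expands as a sum over words $W\in\{L,R,B\}^k$ recording whether each $D_{z_i}^+$ hits the left factor, the right factor, or both; by the isometry of multiple integrals the expectation of each summand is a contraction $f\star_{s}^{m-s}g$, and regrouping words by their characteristic yields exactly \eqref{hpqm}, with no limits. For the converse it invokes an $L^1$-Poincar\'e inequality (Lemma \ref{l2lemma}): since $D^{(p+q+1)}(FG)\equiv0$ and, by the same combinatorics, $(z_1,\dots,z_k)\mapsto\E[D^{(k)}_{z_1,\dots,z_k}(FG)]=k!\,h_k$ is in $L^2(\mu^k)$ for each $k\leq p+q$ by hypothesis, iterating Poincar\'e gives $\E[(FG)^2]<\infty$. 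Both directions thus rest on the add-one-cost calculus rather than on any limiting procedure.
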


\begin{remark}
\begin{enumerate}[(a)]
\item An immediate consequence of Theorem \ref{prodform} is that the product of two multiple integrals with respect to { $\hat{\eta}$} either has a finite chaos decomposition of order at most $p+q$ or none at all. To the best of our knowledge this fact has not been noted so far.
\item Identity \eqref{pf1} in particular holds, whenever $F,G\in L^4(\P)$.
\item A similar product formula as in Theorem \ref{prodform}, but under less general conditions, can be found in \cite[Chapter 6]{Priv}.
\end{enumerate}
\end{remark}

\section{An extension of the fourth moment theorem}
In the recent paper \cite{DP18a}, under mild integrability conditions, we proved the bounds 
\begin{align}
d_\W(F,N)&\leq \left(\sqrt{\frac{2}{\pi}}+2\right)\sqrt{\E[F^4]-3}\label{fmw}\,,\\
d_\mathcal{K}(F,N)&\leq 15.6\sqrt{\E[F^4]-3}\label{fmk}\,,
\end{align} 
where $F=I_p(f)\in C_p$ is a multiple Wiener-It\^o integral with respect to $\eta$ such that $\E[F^2]=1$, $N$ is a standard normal random variable and $d_\W$ and $d_\mathcal{K}$ denote the Wasserstein and Kolmogorov distances, respectively { (see e.g. \cite[Appendix C]{NouPecbook} and the references therein)}. The above mentioned integrability conditions could be successfully removed for the Wasserstein bound \eqref{fmw} in the paper \cite{DVZ18}. In particular, one has the following sequential FMT: {\it Suppose that, for each $n\in\N$, 
$p_n\geq1$ is an integer, $f_n\in L_s^2(\mu^{p_n})$ is a kernel and $F_n=I_{p_n}(f_n)$ is a multiple integral such that 
\[\lim_{n\to\infty}\E[F_n^2]=\lim_{n\to\infty}p_n!\norm{f_n}_2^2=1\quad\text{and}\quad \lim_{n\to\infty}\E[F_n^4]=3\,; \]
then, $(F_n)_{n\in\N}$ converges in distribution to the standard normal random variable $N$}. 

\smallskip

{ In what follows, we will state and prove a substantial refinement of such a result, in the crucial case of a sequence of multiple stochastic integrals belonging to a fixed chaos, that is, such that $p_n\equiv p\geq1$.} In order to do this, for $n\geq 1$, we need to define the auxiliary kernels 
\begin{align}
 h_{2p-m}^{(n)}&=\sum_{s=\lceil\frac{m}{2}\rceil}^{m\wedge p} \frac{(p!)^2}{\bigl((p-s)!\bigr)^2 (2s-m)!(m-s)!}\widetilde{\bigl(f_n\star_{s}^{m-s}f_n\bigr)}\,,\quad 0\leq m\leq { 2}p\,.\label{e:hnpqm}
\end{align}

{ The next statement is one of the main achievements of the paper.}

\begin{theorem}[Extended fourth moment theorem]\label{fmt}
Fix an integer $p\geq1$ and let $F_n=I_p(f_n)$, $n\in\N$, be a sequence in $C_p$ such that $\lim_{n\to\infty}\E[F_n^2]=1$, with $f_n\in L_s^2(\mu^p)$ for $n\in\N$. Let $N\sim N(0,1)$ be a standard normal random variable. Consider the following conditions:
\begin{enumerate}[{\normalfont(i)}]
\item $F_n\stackrel{\D}{\longrightarrow}N$ as $n\to\infty$.
\item $\lim_{n\to\infty}\E[F_n^4]=3$.
\item $\lim_{n\to\infty}\norm{h^{(n)}_{2p-m}}_2=0$ for all $m=1,\dotsc, 2p-1$ and 
$\lim_{n\to\infty} \norm{f_n\otimes_r f_n}_2=0$ for all $r=1,\dotsc,p-1$. 
\item { $\E[F_n^4]<\infty$ for $n$ large enough}, and $\Gamma(F_n,F_n)\stackrel{L^2(\P)}{\longrightarrow} p$ (or, equivalently, \\
$\lim_{n\to\infty}\Var\bigl(\Gamma(F_n,F_n)\bigr)=0)$.
\item $\lim_{n\to\infty}\norm{h^{(n)}_{2p-m}}_2=0$ for all $m=1,\dotsc, 2p-1$.
\end{enumerate}
Then, we have { the implications}
\[{\normalfont\text{(iii)}\Leftrightarrow\text{(ii)}\Leftrightarrow\text{(iv)}\Leftrightarrow\text{(v)}\Rightarrow\text{(i)}}\,.\]
Moreover, if the sequence $(F_n^4)_{n\in\N}$ is uniformly integrable, then the conditions {\normalfont(i)-(v)} are equivalent.
\end{theorem}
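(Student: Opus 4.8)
The plan is to run everything through the explicit, finite chaotic decomposition of $F_n^2$ furnished by Theorem~\ref{prodform}, applied with $q=p$ and $f=g=f_n$. First I would check that $F_n^2\in L^2(\P)$ for all large $n$ under each of conditions (ii)--(v): this is clear from (ii) and (iv), while under (iii) or (v) it follows from the ``if'' direction of Theorem~\ref{prodform}, once one notes that the two remaining kernels are automatically square integrable, namely $h_0^{(n)}=p!\,\norm{f_n}_2^2\in\R$ and $h_{2p}^{(n)}=\widetilde{f_n\otimes f_n}$, with $\norm{\widetilde{f_n\otimes f_n}}_2\le\norm{f_n\otimes f_n}_2=\norm{f_n}_2^2<\infty$. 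Thus, whenever one of (ii)--(v) holds, $F_n^2=\sum_{m=0}^{2p}I_{2p-m}\bigl(h_{2p-m}^{(n)}\bigr)$ is a finite \emph{orthogonal} expansion, so the isometry property gives $\E[F_n^4]=\sum_{m=0}^{2p}(2p-m)!\,\norm{h_{2p-m}^{(n)}}_2^2$. Moreover, since $LF_n=-pF_n$ and $\Gamma(F_n,F_n)=\tfrac12L(F_n^2)+pF_n^2$, one gets $\Gamma(F_n,F_n)=\sum_{m=1}^{2p}\tfrac m2\,I_{2p-m}\bigl(h_{2p-m}^{(n)}\bigr)$, hence $\E[\Gamma(F_n,F_n)]=p\,\E[F_n^2]$ and $\Var\bigl(\Gamma(F_n,F_n)\bigr)=\sum_{m=1}^{2p-1}\tfrac{m^2}{4}(2p-m)!\,\norm{h_{2p-m}^{(n)}}_2^2$.

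The next step is to extract the ``free'' and ``fully contracted'' contributions. Combining the formula for $\E[F_n^4]$ with the classical combinatorial identity $(2p)!\,\norm{\widetilde{f\otimes f}}_2^2=(p!)^2\sum_{r=0}^p\binom pr^2\norm{f\otimes_r f}_2^2$, together with $\norm{f\otimes_0 f}_2^2=\norm{f\otimes_p f}_2^2=\norm{f}_2^4$ and $h_0^{(n)}=\E[F_n^2]$, one arrives at the master identity
\[
\E[F_n^4]-3\bigl(\E[F_n^2]\bigr)^2
=(p!)^2\sum_{r=1}^{p-1}\binom{p}{r}^2\norm{f_n\otimes_r f_n}_2^2
+\sum_{m=1}^{2p-1}(2p-m)!\,\norm{h_{2p-m}^{(n)}}_2^2 ,
\]
a sum of non-negative terms, valid whenever $F_n^2\in L^2(\P)$. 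Since $\E[F_n^2]\to1$ by hypothesis, this immediately yields $\mathrm{(ii)}\Leftrightarrow\mathrm{(iii)}$. Next, the condition ``$\norm{h_{2p-m}^{(n)}}_2\to0$ for $m=1,\dots,2p-1$'' is, on the one hand, exactly (v), and, on the other hand, equivalent (by the formula for $\Var(\Gamma(F_n,F_n))$ and positivity of its coefficients) to $\Var(\Gamma(F_n,F_n))\to0$, hence, since $\E[\Gamma(F_n,F_n)]=p\,\E[F_n^2]\to p$, to $\Gamma(F_n,F_n)\to p$ in $L^2(\P)$; that is, $\mathrm{(iv)}\Leftrightarrow\mathrm{(v)}$. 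The implication $\mathrm{(iii)}\Rightarrow\mathrm{(v)}$ is trivial.

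The core of the proof is the reverse implication $\mathrm{(v)}\Rightarrow\mathrm{(iii)}$: assuming $\norm{h_{2p-m}^{(n)}}_2\to0$ for $m=1,\dots,2p-1$, one must deduce $\norm{f_n\otimes_r f_n}_2\to0$ for $r=1,\dots,p-1$. The plan here is to combine (a) the contraction-norm inequalities of \cite{DP18a,DP18b}, which bound the norm of a ``mixed'' contraction kernel of $f_n$ with itself (one with at least one shared, non-integrated variable) by norms of the pure tensor kernels $f_n\otimes_j f_n$, with (b) the triangular structure of \eqref{e:hnpqm}: for even $m=2r$, the lowest-order summand ($s=r$) of $h_{2p-2r}^{(n)}$ is a positive multiple of $\widetilde{f_n\otimes_r f_n}$, while the remaining summands are symmetrized mixed kernels. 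A downward induction on $r$ then lets one absorb the mixed kernels into already-controlled tensor norms, obtain $\norm{\widetilde{f_n\otimes_r f_n}}_2\to0$, and finally pass to $\norm{f_n\otimes_r f_n}_2\to0$ exactly as in the Gaussian fourth moment theorem, reducing to $r\le p/2$ via the elementary identity $\norm{f\otimes_r f}_2=\norm{f\otimes_{p-r}f}_2$. I expect this disentanglement to be the main obstacle: it requires keeping track of the binomial weights in \eqref{e:hnpqm} and verifying that the requisite contraction inequalities hold under the sole assumption $F_n^2\in L^2(\P)$, with no a priori $L^4$-type control on individual kernels beyond that supplied by Theorem~\ref{prodform}.

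It then remains to close the chain. The implication $\mathrm{(ii)}\Rightarrow\mathrm{(i)}$ is precisely the sequential fourth moment theorem of \cite{DVZ18} recalled above, specialized to $p_n\equiv p$, so nothing further is needed. For the last assertion, assume $(F_n^4)_{n\in\N}$ is uniformly integrable; then $\sup_n\E[F_n^4]<\infty$, so $F_n^2\in L^2(\P)$ for every $n$, and if $F_n\stackrel{\D}{\longrightarrow}N$ then uniform integrability upgrades this to $\E[F_n^4]\to\E[N^4]=3$, i.e.\ $\mathrm{(i)}\Rightarrow\mathrm{(ii)}$. Together with the equivalences established above, this shows that $\mathrm{(i)}$--$\mathrm{(v)}$ are all equivalent under the stated uniform integrability hypothesis.
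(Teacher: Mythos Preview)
Your derivation of the master identity for $\E[F_n^4]-3(\E[F_n^2])^2$ and of the variance formula for $\Gamma(F_n,F_n)$ is correct and coincides with the paper's; these yield $\text{(ii)}\Leftrightarrow\text{(iii)}$ and $\text{(iv)}\Leftrightarrow\text{(v)}$ exactly as in the paper, and your treatment of $\text{(iii)}\Rightarrow\text{(v)}$, $\text{(ii)}\Rightarrow\text{(i)}$ and of the uniform integrability clause is fine.

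The genuine gap is in closing the loop. You propose to prove $\text{(v)}\Rightarrow\text{(iii)}$ directly, by an inductive ``disentanglement'' of $h_{2p-2r}^{(n)}$ that isolates $\widetilde{f_n\otimes_r f_n}$ and absorbs the remaining symmetrized star contractions via contraction-norm inequalities from \cite{DP18a,DP18b}. You rightly flag this as the main obstacle, and it is not clear the plan can be completed: under (v) alone, Theorem~\ref{prodform} only guarantees that the \emph{linear combinations} $h_{2p-m}^{(n)}$ lie in $L^2$, not the individual kernels $f_n\star_s^{m-s}f_n$, so the pointwise contraction inequalities you want to invoke are not a priori available. The passage from $\norm{\widetilde{f_n\otimes_r f_n}}_2\to0$ to $\norm{f_n\otimes_r f_n}_2\to0$ for a single fixed $r$ is likewise not automatic.

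The paper closes the loop by a different and much shorter route, avoiding any kernel disentanglement: it imports two quantitative inequalities linking (ii) and (iv) as black boxes. From \cite[Lemma~3.1]{DP18a},
\[
\frac{(2p-1)^2}{4p^2}\bigl(\E[F_n^4]-3\E[F_n^2]^2\bigr)\ \geq\ \Var\bigl(p^{-1}\Gamma(F_n,F_n)\bigr),
\]
which gives $\text{(ii)}\Rightarrow\text{(iv)}$, and from \cite[Remark~5.2, Part~2]{DVZ18},
\[
\E[F_n^4]-3\E[F_n^2]^2\ \leq\ \frac{6}{p}\,\Var\bigl(\Gamma(F_n,F_n)\bigr),
\]
which gives $\text{(iv)}\Rightarrow\text{(ii)}$. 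Plugging your master identity and your variance formula into this last inequality shows, incidentally, that
\[
(p!)^2\sum_{r=1}^{p-1}\binom{p}{r}^2\norm{f_n\otimes_r f_n}_2^2\ \leq\ C_p\sum_{m=1}^{2p-1}m^2(2p-m)!\,\norm{h_{2p-m}^{(n)}}_2^2
\]
for some constant $C_p$ --- which is precisely the content of $\text{(v)}\Rightarrow\text{(iii)}$ you were attempting to prove by hand, obtained here as a corollary of a spectral/carr\'e-du-champ estimate rather than by kernel-by-kernel analysis.
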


{\begin{remark}
It is very interesting and quite surprising that condition (iii) and the seemingly weaker condition (v) in Theorem \ref{fmt} are indeed equivalent.
\end{remark}}

\begin{proof}[Proof of Theorem \ref{fmt}]
(ii)$\Leftrightarrow$(iii): Using the content of Theorem \ref{prodform}, a straightforward generalization of equation (43) from \cite{DP18a} and identity (5.2.12) from the book \cite{NouPecbook} yield 
\begin{align}
 &\E[F_n^4]-\E[F_n^2]^2=\sum_{k=1}^{2p-1}k!\norm{h_k^{(n)}}_2^2+ (2p)!\norm{f_n\tilde{\otimes} f_n}_2^2\label{fm0}\\
 &=\sum_{m=1}^{2p-1}(2p-m)!\norm{h_{2p-m}^{(n)}}_2^2 +2(p!)^2\norm{f_n}_2^4+(p!)^2\sum_{r=1}^{p-1}\binom{p}{r}^2\norm{f_n\otimes_r f_n}_2^2\notag\\
 &=2(p!)^2\norm{f_n}_2^4+\sum_{m=1}^{2p-1}(2p-m)!\norm{h_{2p-m}^{(n)}}_2^2 +(p!)^2\sum_{r=1}^{p-1}\binom{p}{r}^2\norm{f_n\otimes_r f_n}_2^2\label{fm1}\,,
\end{align}
where $h_{2p-m}^{(n)}$ { is defined in \eqref{e:hnpqm}.}
Now, observing that, by assumption, 
\begin{equation*}
\lim_{n\to\infty}\E[F_n^2]=\lim_{n\to\infty}p!\norm{f_n}_2=1\,,
\end{equation*}
we conclude from \eqref{fm1} that (ii) and (iii) are indeed equivalent.

\smallskip

\noindent(ii)$\Rightarrow$(iv): By Lemma 3.1 of \cite{DP18a} we have the inequality 
\begin{equation*}
\frac{(2p-1)^2}{4p^2}\bigl(\E[F_n^4]-3\E[F_n^2]^2\bigr)\geq \Var\bigl(p^{-1}\Gamma(F_n,F_n)\bigr)\,,
\end{equation*}
which immediately implies the claim. 

\smallskip

\noindent(iv)$\Rightarrow$(ii): By Part 2 of \cite[Remark 5.2]{DVZ18} { (that we can apply, since (iv) ensures that $\E[F_n^4]<\infty$ for large $n$)} we have the bound 
\begin{equation*}
\E[F_n^4]-3\E[F_n^2]^2\leq\frac{6}{p}\Var\bigl(\Gamma(F_n,F_n)\bigr)\,,
\end{equation*}
proving the implication.
\smallskip

\noindent(iv)$\Rightarrow$(v): Using Theorem \ref{prodform}, from the computations on page 1895 in \cite{DP18a} { (that we can apply since, under (iv), one has that $\E[F_n^4]<\infty$ for large $n$)}, we have 
\begin{equation}\label{e:glo}
\Var\bigl(p^{-1}\Gamma(F_n,F_n)\bigr)=\frac{1}{4p^2}\sum_{m=1}^{2p-1}m^2(2p-m)!\norm{h_{2p-m}^{(n)}}_2^2\,,
\end{equation}
and the desired implication follows.

{ \noindent(v)$\Rightarrow$(iv): Using again Theorem \ref{prodform}, we see that, under (v), $F_n\in L^4(\P)$ for $n$ large enough, so that we can apply once again \eqref{e:glo} to deduce the claim.}
\smallskip

\noindent(ii)$\Rightarrow$(i): This is an immediate consequence of the fourth moment bound \eqref{fmw}.

\smallskip
 
\noindent Finally, assume that the sequence $(F_n^4)_{n\in\N}$ is uniformly integrable and that (i) holds. Then, we have 
\[\lim_{n\to\infty}\E[F_n^4]=\E[N^4]=3\]
so that condition (ii) is satisfied. This concludes the proof.
\end{proof}

\subsection{Multivariate extended fourth moment theorems}
In \cite[Corollary 1.8]{DVZ18} the following Peccati-Tudor type theorem on the Poisson space was proved: 
\begin{prop}\label{DVZ}
Fix a dimension $d\in\N$ as well as positive integers $p_1,\dotsc,p_d$. Moreover, for each $n\in\N$, suppose that 
$F^{(n)}:=(F_1^{(n)},\dotsc,F_d^{(n)})^T$ is a random vector such that $F_k^{(d)}=I_{p_k}(f_k^{(n)})\in C_{p_k}$, where 
$f_k^{(n)}\in L_s^2(\mu^{p_k})$, $k=1,\dotsc,d$. Furthermore, suppose that $V=(V(i,j))_{1\leq i,j\leq d}$ is a nonnegative definite matrix such that the covariance matrix $\Sigma_n$ of $F^{(n)}$ converges to $V$ as $n\to\infty$. 
Also, suppose that $N=(N_1,\dotsc,N_d)^T$ is a $d$-dimensional centered Gaussian vector with covariance matrix $V$. If $\lim_{n\to\infty}\E\bigl[(F_k^{(n)})^4\bigr]=3 V(k,k)^2$ for all $1\leq k\leq d$, then, as $n\to\infty$, the random vector $F^{(n)}$ converges in distribution to $N$.  
\end{prop}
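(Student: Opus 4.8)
The plan is to reduce the multivariate convergence to the one-dimensional theorem applied coordinatewise, combined with a multivariate Malliavin-Stein bound on the Poisson space whose error terms are, via the contraction calculus of \cite{DP18a, DVZ18}, controlled by the componentwise fourth moment discrepancies. First I would dispose of the degenerate coordinates: if $V(k,k)=0$ then $\Var\bigl(F_k^{(n)}\bigr)=\Sigma_n(k,k)\to 0$, so $F_k^{(n)}\to 0$ in $L^2(\P)$ and this coordinate may be deleted; hence I assume $V(k,k)>0$ for every $k$. Setting $\hat F_k^{(n)}:=V(k,k)^{-1/2}F_k^{(n)}=I_{p_k}\bigl(V(k,k)^{-1/2}f_k^{(n)}\bigr)\in C_{p_k}$, one has $\E\bigl[(\hat F_k^{(n)})^2\bigr]=V(k,k)^{-1}\Sigma_n(k,k)\to 1$ and, by hypothesis, $\E\bigl[(\hat F_k^{(n)})^4\bigr]=V(k,k)^{-2}\E\bigl[(F_k^{(n)})^4\bigr]\to 3$. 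Since the order $p_k$ does not depend on $n$, Theorem \ref{fmt} applies to each sequence $\bigl(\hat F_k^{(n)}\bigr)_n$: its condition (ii) holds, hence so do (i) and (iii). In particular each $\hat F_k^{(n)}\stackrel{\D}{\longrightarrow}N(0,1)$, and all the symmetrized kernels entering condition (iii) --- i.e.\ the kernels $h^{(n)}_{2p_k-m}$ of \eqref{e:hnpqm} built from $f_k^{(n)}$ (for $m=1,\dots,2p_k-1$) and the contractions $f_k^{(n)}\otimes_r f_k^{(n)}$ (for $r=1,\dots,p_k-1$) --- have vanishing $L^2$ norm as $n\to\infty$.

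Next I would invoke a multivariate Malliavin-Stein inequality on the Poisson space, as developed in \cite{DVZ18} and the references therein: for a suitable smooth probability distance $d_\ast$ between the law of $F^{(n)}$ and that of the Gaussian vector $N$ there is an estimate of the schematic form
\[d_\ast\bigl(F^{(n)},N\bigr)\le c_1\sum_{i,j=1}^d\Bigl|V(i,j)-\tfrac{1}{p_j}\E\bigl[\Gamma(F_i^{(n)},F_j^{(n)})\bigr]\Bigr|+c_2\sum_{i,j=1}^d\sqrt{\Var\Bigl(\tfrac{1}{p_j}\Gamma(F_i^{(n)},F_j^{(n)})\Bigr)}+c_3\,R_n\,,\]
where $R_n$ is a finite sum of \emph{non-diffusive} third-order remainder terms, characteristic of the Poisson setting. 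The first sum tends to $0$: by orthogonality of the chaoses one has $\tfrac{1}{p_j}\E[\Gamma(F_i^{(n)},F_j^{(n)})]=\Sigma_n(i,j)\to V(i,j)$, and when $p_i\ne p_j$ both quantities vanish identically.

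The crucial step is to bound the two remaining groups of terms by the diagonal kernels of the first paragraph. Expanding $\Gamma(F_i^{(n)},F_j^{(n)})$ into its orthogonal chaotic components by means of the general product formula of Theorem \ref{prodform} --- exactly as in the computation leading to \eqref{e:glo} --- one sees that $\Var\bigl(\tfrac{1}{p_j}\Gamma(F_i^{(n)},F_j^{(n)})\bigr)$ is a positive combination of squared $L^2$ norms of mixed kernels obtained from formula \eqref{hpqm} with $(f,g)=(f_i^{(n)},f_j^{(n)})$; by bilinearity and the contraction estimates of \cite{DP18a, DVZ18}, each such norm is controlled by the diagonal kernel norms of $f_i^{(n)}$ and of $f_j^{(n)}$ (and similarly for the remaining boundary cases, including $p_i\ne p_j$), all of which vanish by the first paragraph, and an analogous domination handles each term in $R_n$. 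In fact, the whole right-hand side above is then bounded by a constant times $\sum_{i,j}|\Sigma_n(i,j)-V(i,j)|+\sum_{k}\sqrt{\bigl|\E[(F_k^{(n)})^4]-3\,\Sigma_n(k,k)^2\bigr|}$, which tends to $0$ by hypothesis. Hence $d_\ast(F^{(n)},N)\to 0$, i.e.\ $F^{(n)}\stackrel{\D}{\longrightarrow}N$.

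The step I expect to be the main obstacle is this last one: pinning down the exact form of the multivariate Malliavin-Stein bound on the Poisson space --- above all its non-diffusive remainder $R_n$ --- and then carrying out the bookkeeping that expresses $\Var(\Gamma(F_i^{(n)},F_j^{(n)}))$ and $R_n$ purely in terms of the diagonal kernels of the $f_k^{(n)}$. This is precisely where the Poisson case genuinely departs from the diffusive Gaussian one: individual higher-order contraction norms need not vanish on the Poisson space, so one cannot simply read them off from the fourth moment condition but must work instead with the admissible linear combinations $h^{(n)}_\bullet$ --- which is exactly what makes Theorem \ref{prodform} the right tool here. An alternative route, sidestepping the remainder analysis, would be a Peccati-Tudor type reduction, proving that marginal convergence of each $(\hat F_k^{(n)})_n$ together with $\Sigma_n\to V$ already forces joint convergence, e.g.\ through a direct joint-cumulant computation exploiting the finiteness of the chaotic decomposition of products granted by Theorem \ref{prodform}; but this leads back once more to controlling the same mixed contraction quantities.
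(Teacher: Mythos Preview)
The paper does not prove Proposition~\ref{DVZ} at all: it is quoted verbatim as Corollary~1.8 of \cite{DVZ18}, with no argument given. So there is no in-paper proof to compare against beyond the bare citation.

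Your sketch is an attempt to reconstruct an actual argument, and the strategy you outline---a multivariate Malliavin--Stein bound on the Poisson space whose error terms (covariance defect, variances of mixed carr\'e-du-champ operators, and a non-diffusive remainder) are all dominated by the componentwise fourth-moment discrepancies---is essentially the approach of \cite{DVZ18} itself. Two remarks. First, your use of Theorem~\ref{fmt} to extract the vanishing of the kernels $h^{(n)}_{2p_k-m}$ and $f_k^{(n)}\otimes_r f_k^{(n)}$ is a convenience not available in \cite{DVZ18} (which predates this paper and works directly with $\Gamma$), but it creates no circularity since Theorem~\ref{fmt} does not rely on Proposition~\ref{DVZ}. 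Second, you correctly flag the genuine crux: dominating $\Var\bigl(\Gamma(F_i^{(n)},F_j^{(n)})\bigr)$ and the remainder $R_n$ by \emph{diagonal} fourth-moment quantities. This is precisely the content of the key estimates in \cite{DVZ18}, and since you ultimately defer that step to ``the contraction estimates of \cite{DP18a, DVZ18}'', your sketch is in effect equivalent to the paper's citation---just with the surrounding scaffolding made explicit. If you wanted a self-contained proof you would need to reproduce that domination inequality; as written, the sketch is correct in outline but still leans on \cite{DVZ18} at exactly the point where the paper does.
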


Under the assumptions of Proposition \ref{DVZ}, for $n\in\N$ and $k=1,\dotsc,d$, define the symmetric kernels $h_{k,2p_k-m}^{(n)}$, $0\leq m\leq  2p_k$, by 
\begin{align*}
 h_{k,2p_k-m}^{(n)}&=\sum_{s=\lceil\frac{m}{2}\rceil}^{m\wedge p_k} \frac{(p_k!)^2}{\bigl((p_k-s)!\bigr)^2 (2s-m)!(m-s)!}\widetilde{\bigl(f_n\star_{s}^{m-s}f_n\bigr)}\,.
\end{align*}

With this notation at hand we can state our multivariate extended fourth moment theorem: 
\begin{theorem}[Extended Multivariate FMT]\label{mfmt}
Fix a dimension $d\in\N$ as well as positive integers $p_1,\dotsc,p_d$. Moreover, for each $n\in\N$, suppose that 
$F^{(n)}:=(F_1^{(n)},\dotsc,F_d^{(n)})^T$ is a random vector such that $F_k^{(d)}=I_{p_k}(f_k^{(n)})\in C_{p_k}$, where 
$f_k^{(n)}\in L_s^2(\mu^{p_k})$, $k=1,\dotsc,d$. Furthermore, suppose that $V=(V(i,j))_{1\leq i,j\leq d}$ is a nonnegative definite matrix such that the covariance matrix $\Sigma_n$ of $F^{(n)}$ converges to $V$ as $n\to\infty$. 
Suppose that $N=(N_1,\dotsc,N_d)^T$ is a $d$-dimensional centered Gaussian vector with covariance matrix $V$. 
Consider the following conditions: 
\begin{enumerate}[{\normalfont (i)}]
\item $F^{(n)}\stackrel{\D}{\longrightarrow}N$ as $n\to\infty$.
\item For all $k=1,\dotsc,d$: $\lim_{n\to\infty}\E\bigl[(F_k^{(n)})^4\bigr]=3V(k,k)^2$.
\item For all $k=1,\dotsc,d$: $\lim_{n\to\infty}\norm{h^{(n)}_{k,2p_k-m}}_2=0$ for all $m=1,\dotsc, 2p_k-1$ and 
$\lim_{n\to\infty} \norm{f_k^{(n)}\otimes_r f_k^{(n)}}_2=0$ for all $r=1,\dotsc,p_k-1$. 
\item { For $n$ large enough, $F_k^{(n)}\in L^4(\P)$ for every $k=1,...,d$} and, again for all $k=1,\dotsc,d$, $\Gamma(F_k^{(n)},F_k^{(n)})\stackrel{L^2(\P)}{\longrightarrow} pV(k,k)$\\
 (or, equivalently, $\lim_{n\to\infty}\Var\bigl(\Gamma(F_k^{(n)},F_k^{(n)})\bigr)=0)$.
\item For all $k=1,\dotsc,d$: $\lim_{n\to\infty}\norm{h^{(n)}_{k,2p_k-m}}_2=0$ for all $m=1,\dotsc, 2p_k-1$.
\end{enumerate}
Then, we have the implications
\[{\normalfont\text{(iii)}\Leftrightarrow\text{(ii)}\Leftrightarrow\text{(iv)}\Leftrightarrow\text{(v)}\Rightarrow\text{(i)}}\,;\]
moreover if, for each $k=1,\dotsc,d$, the sequence $\bigl((F_k^{(n)})^4\bigr)_{n\in\N}$ is uniformly integrable, then all the conditions {\normalfont(i)-(v)} are equivalent. 
\end{theorem}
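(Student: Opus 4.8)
The plan is to reduce the multivariate statement to the one-dimensional Theorem \ref{fmt} componentwise, and then to upgrade the resulting marginal convergence to joint convergence using Proposition \ref{DVZ}. The key observation is that conditions (ii)--(v), as well as the kernel quantities appearing in them, are \emph{componentwise} conditions: for each fixed $k$, the kernels $h_{k,2p_k-m}^{(n)}$ are precisely the kernels $h^{(n)}_{2p_k-m}$ built from $f_k^{(n)}\in L^2_s(\mu^{p_k})$ in the sense of \eqref{e:hnpqm}, and the carr\'e-du-champ quantity $\Var\bigl(\Gamma(F_k^{(n)},F_k^{(n)})\bigr)$ only involves $F_k^{(n)}$. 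Hence the equivalences $\text{(iii)}\Leftrightarrow\text{(ii)}\Leftrightarrow\text{(iv)}\Leftrightarrow\text{(v)}$ follow by applying the corresponding equivalences in Theorem \ref{fmt} to each coordinate separately; one only has to note that $\lim_n\E[(F_k^{(n)})^2]=V(k,k)$ (from $\Sigma_n\to V$) plays the role of the normalisation $\lim_n\E[F_n^2]=1$ there, with the obvious rescaling of constants (dividing $F_k^{(n)}$ by $\sqrt{V(k,k)}$ when $V(k,k)>0$, and treating $V(k,k)=0$ as a degenerate case where $F_k^{(n)}\to 0$ in $L^2$ and all the quantities in (ii)--(v) vanish trivially). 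In each case the relevant identities \eqref{fm1} and \eqref{e:glo}, which rest on the general product formula Theorem \ref{prodform}, carry over verbatim with $p$ replaced by $p_k$.

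Next I would establish $\text{(ii)}\Rightarrow\text{(i)}$. This is where the multivariate input is genuinely needed: by the already-proven chain, (ii) is equivalent to the variance-of-carr\'e-du-champ condition (iv), but more to the point, (ii) states exactly the hypothesis $\lim_{n\to\infty}\E[(F_k^{(n)})^4]=3V(k,k)^2$ for all $k$ required in Proposition \ref{DVZ}. Since we are also assuming $\Sigma_n\to V$, Proposition \ref{DVZ} applies directly and gives $F^{(n)}\stackrel{\D}{\longrightarrow}N$, i.e. (i). So no new work is required here beyond invoking \cite[Corollary 1.8]{DVZ18}.

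Finally, for the reverse direction under uniform integrability: assume (i) and that each $\bigl((F_k^{(n)})^4\bigr)_{n}$ is uniformly integrable. Convergence in distribution of the vector $F^{(n)}$ to $N$ implies convergence in distribution of each coordinate $F_k^{(n)}$ to $N_k\sim N(0,V(k,k))$; combined with uniform integrability of the fourth powers, this yields $\lim_n\E[(F_k^{(n)})^4]=\E[N_k^4]=3V(k,k)^2$, which is (ii). Hence under the uniform integrability assumption all of (i)--(v) are equivalent, completing the proof.

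I do not expect a serious obstacle here: the proof is essentially a bookkeeping argument combining Theorem \ref{fmt} coordinate-by-coordinate with Proposition \ref{DVZ}. The only points requiring a little care are (a) the degenerate case $V(k,k)=0$, where one must check that all the kernel norms and carr\'e-du-champ variances in (ii)--(v) automatically tend to $0$ because $\E[(F_k^{(n)})^2]\to 0$ forces $p_k!\|f_k^{(n)}\|_2^2\to 0$ and hence, via \eqref{fm1}, all the $\|h^{(n)}_{k,\cdot}\|_2$ and $\|f_k^{(n)}\otimes_r f_k^{(n)}\|_2$ to $0$ as well; and (b) making sure that when extracting the marginal convergence in the last step one really only needs one-dimensional marginals, which is standard. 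Everything else is a direct transcription of the one-dimensional argument.
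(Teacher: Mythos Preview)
Your proposal is correct and follows essentially the same route as the paper's own proof, which simply says the equivalences (ii)--(v) are proved ``similarly as in the proof of Theorem \ref{fmt}'' after extending to general positive variances, that (ii)$\Rightarrow$(i) follows from Proposition \ref{DVZ}, and that (i)$\Rightarrow$(ii) under uniform integrability follows as before. Your treatment is in fact more detailed than the paper's (which does not discuss the degenerate case $V(k,k)=0$ at all); just note that in that case the claim that the kernel norms vanish is not quite ``trivial'' from $\|f_k^{(n)}\|_2\to 0$ alone---you still need Theorem \ref{prodform} to guarantee $F_k^{(n)}\in L^4$ before invoking \eqref{fm1}, exactly as in the nondegenerate argument.
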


\begin{proof}
The equivalence of items (ii)-(v) can be proved similarly as in the proof of Theorem \ref{fmt}. 
One only has to extend the arguments to general positive variances of the coordinates of $N$. That (ii) implies (i) follows from Theorem \ref{DVZ}. Finally, the fact that (i) implies (ii) under the assumption of uniform integrability follows as in the previous proof.
\end{proof}

\section{General Poisson point processes and technical framework}\label{s:generalpoisson}
Here we describe our theoretical framework by adopting the language of \cite{Lastsv} (see also \cite{LPbook}).

\smallskip

Let $(\mathcal{Z},\mathscr{Z})$ be an arbitrary measurable space endowed with a $\sigma$-finite measure $\mu$ and denote by $\mathbf{N}_\sigma=\mathbf{N}_\sigma(\mathcal{Z})$ the space of all $\sigma$-finite point measures $\chi$ on $(\mathcal{Z},\mathscr{Z})$ that satisfy $\chi(B)\in\N_0\cup\{+\infty\}$ for all $B\in\mathscr{Z}$. This space is equipped with the smallest 
$\sigma$-field $\mathscr{N}_\sigma:=\mathscr{N}_\sigma(\calZ)$ such that, for each $B\in\scrZ$, the mapping $\mathbf{N}_\sigma\ni\chi\mapsto\chi(B)\in[0,+\infty]$ is measurable. It is convenient to view the Poisson process $\eta$ as a random element 
of the measurable space $(\mathbf{N}_\sigma,\mathscr{N}_\sigma)$, defined on an abstract probability space $(\Omega, \F,\P)$. Without loss of generality we may assume that $\F=\sigma(\eta)$.
Moreover, we denote by $\mathbf{F}(\mathbf{N}_\sigma)$ the class of all measurable functions $\mathfrak{f}:\mathbf{N}_\sigma\rightarrow\R$ and by $\mathcal{L}^0(\Omega):=\mathcal{L}^0(\Omega,\F)$ the class of real-valued, measurable functions $F$ on $\Omega$. 
Note that, as $\F=\sigma(\eta)$, each $F\in \mathcal{L}^0(\Omega)$ can be written as $F=\mfk(\eta)$ for some measurable function $\mfk$. This $\mfk$, called a {\it representative} of $F$, is $\Prob_\eta$-a.s. uniquely defined, where $\Prob_\eta=\Prob\circ\eta^{-1}$ is the image measure of $\Prob$ under $\eta$ on the space $(\mathbf{N}_\sigma,\mathscr{N}_\sigma)$. For  $F=\mfk(\eta)\in\mathcal{L}^0(\Omega)$ and $z\in\mathcal{Z}$ we define the \textit{add one cost operators} $D_z^+$, $z\in\mathcal{Z}$, by 
\begin{equation*}
D_z^{+} F:=\mfk(\eta+\delta_z)-\mfk(\eta)\,.
\end{equation*}
It is straightforward to verify the following product rule: For $F,G\in \mathcal{L}^0(\Omega)$ and $z\in\mathcal{Z}$ one has
\begin{align}\label{prodrule}
D^+_z(FG)&=GD^+_zF+FD_z^+G+D_z^+F D_z^+G\,.
\end{align}

More generally, if $m\in\N$ and $z_1,\dotsc, z_m\in\mathcal{Z}$, then we define inductively  $D_{z_1}^{(1)}=D_{z_1}^+$ and
\begin{equation*}
D_{z_1,\dotsc,z_m}^{(m)} F:=D_{z_1}^+\bigl(D_{z_2,\dotsc,z_m}^{(m-1)}F\bigr)\,,\quad m\geq2\,.
\end{equation*}
It is easily seen that 
\begin{equation}\label{Dm}
D_{z_1,\dotsc,z_m}^{(m)} F =\sum_{J\subseteq[m]}(-1)^{m-\abs{J}}\; \mfk\Bigl(\eta+\sum_{i\in J}\delta_{z_i}\Bigl)
\end{equation}
which shows that the mapping $\Omega\times\mathcal{Z}^m\ni (\omega,z_1,\dotsc,z_m)\mapsto D_{z_1,\dotsc,z_m}^{(m)} F(\omega)
\in\R$ is $\F\otimes\mathscr{Z}^{\otimes m}$-measurable. Moreover, it also implies that $D_{z_1,\dotsc,z_m}^{(m)} F=D_{z_{\sigma(1)},\dotsc,z_{\sigma(m)}}^{(m)} F$ for each permutation $\sigma$ of $[n]$. 
\smallskip

For an integer $p\geq1$ we denote by $L^2(\mu^p)$ the Hilbert space of all square-integrable and real-valued functions on $\mathcal{Z}^p$ and we write $L^2_s(\mu^p)$ 
for the subspace of those functions in $L^2(\mu^p)$ which are $\mu^p$-a.e. symmetric. Moreover, for ease of notation, we denote by $\Enorm{\cdot}$ and $\langle \cdot,\cdot\rangle$ the usual norm and scalar product 
on $L^2(\mu^p)$ for whatever value of $p$. We further define $L^2(\mu^0):=\R$. For $f\in L^2(\mu^p)$, we denote by $I_p(f):=I_p^\eta(f)$ the \textit{multiple Wiener-It\^o integral} of $f$ with respect to $\hat{\eta}$. 
If $p=0$, then, by convention, 
$I_0(c):=c$ for each $c\in\R$.
We refer to Section 3 of \cite{Lastsv} for a precise definition and the following basic properties of these integrals in the general framework of a $\sigma$-finite measure space $(\mathcal{Z},\mathscr{Z},\mu)$. 
If $F=I_p(f)$ for some $p\geq 1$ and $f\in L_s^2(\mu^p)$, then for all $z\in\mathcal{Z}$ one has 
\begin{equation}\label{derint}
 D_z^+F=I_{p-1}\bigl(f(z,\cdot)\bigr)\,.
\end{equation}
In particular, $D_z^+F$ is a multiple Wiener-It\^{o} integral of order $p-1$. If, on the other hand, $p=0$, then it is easy to see that $D^+_z F=0$.

\smallskip

As recalled above, for $p\geq0$ the Hilbert space consisting of all random variables $I_p(f)$, $f\in L^2(\mu^p)$, is called the \textit{$p$-th Wiener chaos} associated with $\eta$, and is customarily denoted by $C_p$. 

\smallskip

 From Theorem 2 in \cite{Lastsv} (which is Theorem 1.3 from the article \cite{LaPen11}) it is known that, for all $F\in L^2(\Prob)$ and all $p\geq1$, the kernel $f_p$ in \eqref{chaosdec} is explicitly given by 
\begin{equation}\label{kerform}
 f_p(z_1,\dotsc,z_p)=\frac{1}{p!}\E\bigl[D^{(p)}_{z_1,\dotsc,z_p}F\bigr]\,,\quad z_1,\dotsc,z_p\in\calZ\,.
\end{equation}
Identity \eqref{kerform} will be an essential tool for the proof of Theorem \ref{prodform}.

\section{Proof of the product formula}

For the proof of Theorem \ref{prodform} we will need the following auxiliary result that 
provides us with a sufficient condition for an integrable random variable $F$ to be in $L^2(\P)$. 
\begin{lemma}\label{l2lemma}
Suppose that $F\in L^1(\P)$ is such that there exists an $M\in\N$ such that 
\begin{enumerate}[{\normalfont (a)}]
\item For all $z_1,\dotsc,z_{M+1}\in\mathcal{Z}$ one has $D^{(M+1)}_{z_1,\dotsc,z_{M+1}}F=0$.
\item For all $m=1,\dotsc,M$ and all $z_1,\dotsc,z_{m}\in\mathcal{Z}$, $D^{(m)}_{z_1,\dotsc,z_{m}}F\in L^1(\P)$\\ and 
$(z_1,\dotsc,z_m)\mapsto\E\bigl[D^{(m)}_{z_1,\dotsc,z_{m}}F\bigr]\in L^2(\mu^m)$.
\end{enumerate}
Then, $F\in L^2(\P)$.
\end{lemma}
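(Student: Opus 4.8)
\textbf{Proof plan for Lemma \ref{l2lemma}.} The natural strategy is to use the chaotic decomposition \eqref{chaosdec} together with the explicit formula \eqref{kerform} for the kernels. First I would observe that condition (a) forces the candidate chaotic expansion of $F$ to be finite: by \eqref{kerform}, the kernel $f_p$ of $F$ equals $\frac{1}{p!}\E[D^{(p)}_{z_1,\dotsc,z_p}F]$, and \eqref{Dm} together with (a) shows $D^{(p)}F\equiv 0$ for all $p\geq M+1$, hence $f_p=0$ for $p\geq M+1$. For $1\leq p\leq M$, condition (b) guarantees that $\E[D^{(p)}F]$ makes sense (since $D^{(p)}F\in L^1(\P)$) and defines an element of $L^2(\mu^p)$; thus each candidate kernel $f_p$ lies in $L^2(\mu^p)$ (after symmetrization, which only decreases the norm). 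Therefore the finite sum $G:=\E[F]+\sum_{p=1}^M I_p(f_p)$ is a well-defined element of $L^2(\P)$.

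The remaining and \emph{main} task is to show $F=G$ $\P$-a.s., i.e.\ that $F$ genuinely coincides with the sum of its formal chaotic projections even though a priori we only know $F\in L^1(\P)$. The idea is to consider the difference $H:=F-G\in L^1(\P)$ and to show that $H$ has vanishing "chaotic projections" in the sense of \eqref{kerform}: since $G\in L^2(\P)$ has chaotic kernels exactly $f_1,\dotsc,f_M$ (and zero beyond), linearity of $D^{(m)}$ and of $\E[\cdot]$ gives $\E[D^{(m)}_{z_1,\dotsc,z_m}H]=0$ for all $m\geq 1$ and ($\mu^m$-a.e.) $(z_1,\dotsc,z_m)$, while $\E[H]=0$. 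So it suffices to prove the following uniqueness-type statement: \emph{if $H\in L^1(\P)$ satisfies $\E[D^{(m)}_{z_1,\dotsc,z_m}H]=0$ for all $m\geq 0$ and a.e.\ arguments, then $H=0$ a.s.} This is where the real work lies. One route is to test $H$ against a rich enough class of bounded functionals whose chaotic expansions are explicit and finite — for instance exponentials/Wick-type functionals of the form $\exp(\hat\eta(u))$ (suitably normalized) for $u$ simple with compact support, which are dense in a separating sense in $L^\infty$ and whose add-one-cost derivatives are computable in closed form; one shows $\E[H\cdot(\text{test functional})]$ can be expanded into a (finite or absolutely convergent) series of terms each of which is a multiple of some $\E[D^{(m)}H]$ integrated against a kernel, hence vanishes. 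Since such test functionals separate points of $L^1(\P)$ (they generate $\sigma(\eta)=\F$), this forces $H=0$.

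An alternative, and perhaps cleaner, route avoids density arguments on the probability space: approximate $F$ by truncation, $F_K:=(F\wedge K)\vee(-K)$, which is bounded hence in $L^2(\P)$, with chaotic kernels $f_p^{(K)}$ given by \eqref{kerform} applied to $F_K$. One then argues that $D^{(m)}F_K\to D^{(m)}F$ in an appropriate sense and that $\|D^{(m)}F_K\|$-type quantities are controlled uniformly in $K$ by the $L^2(\mu^m)$-bounds from (b), so that $f_p^{(K)}\to f_p$ in $L^2(\mu^p)$ and $\sum_{p} p!\,\|f_p^{(K)}\|_2^2$ stays bounded; passing to the limit along a subsequence converging also a.s. identifies the $L^2$-limit of $F_K$ with $F$. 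The delicate point here is justifying the interchange of limits in the difference operators, since $D_z^+$ is a nonlinear-in-$\eta$ shift rather than a classical derivative; one handles this via \eqref{Dm}, writing $D^{(m)}F_K$ as an explicit finite alternating sum of $\mfk_K(\eta+\sum_{i\in J}\delta_{z_i})$, and invoking dominated/monotone convergence after noting $\mfk_K\to\mfk$ pointwise with $|\mfk_K|\leq|\mfk|$.

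\textbf{Main obstacle.} The crux is the uniqueness step: upgrading from "$F\in L^1$ with all formal chaotic kernels in $L^2$, finitely many nonzero" to "$F$ actually equals the corresponding finite chaos sum in $L^2$". The subtlety is that \eqref{chaosdec}--\eqref{kerform} are a priori stated for $F\in L^2(\P)$, so one cannot directly invoke them for $F\in L^1(\P)$; the argument must either truncate and pass to the limit (with careful control of the difference operators via \eqref{Dm}), or test against a separating family of bounded functionals. Everything else — finiteness of the expansion from (a), membership of kernels in $L^2$ from (b), and assembling $G\in L^2(\P)$ — is routine bookkeeping.
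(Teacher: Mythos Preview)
Your strategy is natural but takes a detour around the key tool, and both of your proposed routes through the ``main obstacle'' have genuine gaps. The paper's proof is a one-step argument: it invokes the $L^1$-Poincar\'e inequality on the Poisson space,
\[
\E[F^2]\leq (\E[F])^2+\int_{\mathcal Z}\E\bigl[(D_z^+F)^2\bigr]\,\mu(dz)\,,
\]
valid for $F\in L^1(\P)$ with both sides possibly infinite, and simply iterates it. Since each $D^{(m)}_{z_1,\dotsc,z_m}F\in L^1(\P)$ by (b), the inequality can be reapplied to $D^{(m)}F$ at every stage; the iteration terminates at level $M{+}1$ by (a), leaving
\[
\E[F^2]\leq (\E[F])^2+\sum_{m=1}^M\int_{\mathcal Z^m}\bigl(\E[D^{(m)}_{z_1,\dotsc,z_m}F]\bigr)^2\,d\mu^m<\infty\,,
\]
finite by (b). There is no need to first construct $G$ and then argue $F=G$: the Poincar\'e bound shows directly that $\E[F^2]$ is dominated by the $L^2$-norms of the formal kernels, so $F\in L^2(\P)$ immediately, after which the standard chaotic decomposition applies if one wants the identification.

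Regarding your two routes: the truncation approach does not work as sketched. Truncation is a nonlinear operation, so $D^{(M+1)}F=0$ does \emph{not} imply $D^{(M+1)}F_K=0$; hence $F_K$ generally has infinitely many nonzero chaotic kernels, and you lose the finiteness that drives the whole argument. Moreover, while the $1$-Lipschitz property of truncation gives $|D_z^+F_K|\leq|D_z^+F|$, this bound does \emph{not} iterate to higher differences $D^{(m)}$, so there is no dominating function available to push $f_p^{(K)}\to f_p$ in $L^2(\mu^p)$, nor to bound $\sum_p p!\,\|f_p^{(K)}\|_2^2$ uniformly in $K$ from hypothesis (b) alone. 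The exponential-testing route is plausible in spirit but you have not explained how $\E[H\cdot(\text{test})]$ actually unfolds into integrals of $\E[D^{(m)}H]$; doing so rigorously essentially requires an iterated Mecke-type formula and is considerably more work than the Poincar\'e iteration. The missing ingredient in your plan is precisely the $L^1$-Poincar\'e inequality, which dissolves your ``main obstacle'' in one stroke.
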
 

\begin{proof}
The proof relies on the following $L^1$-version of the \textit{Poincar\'{e} inequality} on the Poisson space (see { \cite[Proposition 2.5]{LPS}, as well as} \cite[Corollary 1]{Lastsv}: For $F\in L^1(\P)$ one has 
\begin{equation}\label{poincare}
\E[F^2]\leq (\E[F])^2+\int_\mathcal{Z}\E\bigl[(D_z^{+} F)^2\bigr]\mu(dz)
\end{equation}
with both sides possibly being equal to $+\infty$. The lemma can now be concluded by iterating \eqref{poincare}:
\begin{align*}
\E[F^2]&\leq (\E[F])^2+\int_\mathcal{Z}\E\bigl[(D_{z_1}^{+} F)^2\bigr]\mu(dz_1)\\
&\leq (\E[F])^2+\int_\mathcal{Z}\bigl(\E\bigl[D_{z_1}^{+} F\bigr]\bigr)^2\mu(dz_1) 
+\int_{\mathcal{Z}^2}\E\bigl[\bigl(D_{z_1,z_2}^{(2)} F\bigr)^2\bigr]\mu^2(dz_1,dz_2)\\
&\leq\ldots\leq  (\E[F])^2+\sum_{m=1}^M\int_{\mathcal{Z}^m}\bigl(\E\bigl[D_{z_1,\dotsc,z_m}^{(m)} F\bigr]\bigr)^2\mu^m(dz_1,\dotsc,dz_m)\\
&\;+\int_{\mathcal{Z}^{M+1}}\E\bigl[\bigl(D_{z_1,\dotsc,z_{M+1}}^{(M+1)} F\bigr)^2\bigr]\mu^{M+1}(dz_1,\dotsc,dz_{M+1})\\
&=(\E[F])^2+\sum_{m=1}^M\int_{\mathcal{Z}^m}\bigl(\E\bigl[D_{z_1,\dotsc,z_m}^{(m)} F\bigr]\bigr)^2\mu^m(dz_1,\dotsc,dz_m)
<+\infty\,.
\end{align*}
Note that we have used assumption (a) for the last equality and (b) in order to use \eqref{poincare} iteratively as well as for the final inequality.
\end{proof}

{We now provide a detailed proof of Theorem \ref{prodform}, which has a purely combinatorial nature and does not make any use of recursive arguments.}

\begin{proof}[Proof of Theorem \ref{prodform}] 
We first make the following important observation: whenever $F=I_p(f)$ and $G=I_q(g)$ are as in the statement of the theorem, for all $k\in\N_0$ and all fixed $z_1,\dotsc,z_k\in\mathcal{Z}$ we have that $D^{(k)}_{z_1,\dotsc,z_k}(FG)$ is a (finite) linear combination of products of two multiple Wiener-It\^{o} integrals of orders less than $p$ and $q$, respectively. This easily follows iteratively from \eqref{prodrule} and \eqref{derint}. In particular, all summands appearing in this linear combination and, a fortiori, the quantity $D^{(k)}_{z_1,\dotsc,z_k}(FG)$ itself is in $L^1(\P)$. 
This observation will be used implicitly in the rest of this proof.

Let us now assume first that $FG\in L^2(\P)$. Then, we know that a chaotic decomposition of the form 
\begin{equation*}
FG=\sum_{k=0}^\infty I_k(h_k)
\end{equation*}
exists, with $h_0=\E[FG]$ and $h_k\in L_s^2(\mu^k)$ for each $k\in\N$. In \cite[Lemma 2.4]{DP18a} we already proved 
that $h_k=0$ for all $k\geq p+q$. However, this will also easily follow from the arguments used in the present proof. 
From \eqref{kerform} we immediately get that 
\begin{equation}\label{hk}
h_k(z_1,\dotsc,z_k)=\frac{1}{k!}\E\bigl[D_{z_1,\dotsc,z_k}^{(k)}(FG)\bigr]\,,\quad k\in\N\,, z_1,\dotsc,z_k\in\mathcal{Z}\,.
\end{equation}
In order to get more explicit expressions for the $h_k$ we introduce the following operators: For a pair 
$(X,Y)\in\mathcal{L}^0(\Omega)\times\mathcal{L}^0(\Omega)$ and $z\in\mathcal{Z}$, define
\begin{align*}
D_z^{L}(X,Y)&:=(D_z^{+} X,Y)\,,\notag\\
D_z^{R}(X,Y)&:=(X,D_z^{+}Y)\quad\text{and}\notag\\
D_z^{B}(X,Y)&:=(D_z^{+}X,D_z^{+}Y)\,.
\end{align*}
More generally, if $W=(W_1,\dotsc,W_m)\in\{L,R,B\}^m$ is a \textit{word} of length $\abs{W}=m$ in the \textit{alphabet} 
$\{L,R,B\}$ and $z_1,\dotsc,z_m\in\mathcal{Z}$, then we let 
\[D^{[W_1]}_{z_1}(X,Y):=D_{z_1}^{W_1}(X,Y)\,,\]
if $m=1$ and, for $m\geq2$, we define inductively
\[D^{[W]}_{z_1,\dotsc,z_m}(X,Y):=D_{z_1}^{W_1}\bigl(D^{[W']}_{z_2,\dotsc,z_m}(X,Y)\bigr)\,,\]
where $W'=(W_2,\dotsc,W_m)$.

\smallskip

\noindent Then, with the multiplication operator $K:\mathcal{L}^0(\Omega)\times\mathcal{L}^0(\Omega)
\rightarrow\mathcal{L}^0(\Omega)$ defined by $K(X,Y):=X\cdot Y$ the product rule \eqref{prodrule} implies that 
\begin{align}\label{dkcon}
D^{(k)}_{z_1,\dotsc,z_k}(FG)&=\sum_{\abs{W}=k} K\bigl(D^{[W]}_{z_1,\dotsc,z_k}(F,G)\bigr)\,,
\end{align}
where the sum runs over all words $W$ of length $k$.
Hence, \eqref{hk} can be written as 
\begin{equation}\label{hk2}
h_k(z_1,\dotsc,z_k)=\frac{1}{k!}\sum_{\abs{W}=k}\E\Bigl[K\bigl(D^{[W]}_{z_1,\dotsc,z_k}(F,G)\bigr)\Bigr]\,.
\end{equation}
For a word $W=(W_1,\dotsc,W_m)\in\{L,R,B\}^m$ as above we define its \textit{characteristic} $\chi(W):=(l(W),r(W),b(W))$ by letting 
\begin{align*}
 l(W)&:=\babs{\{i\in[m]\,:\, W_i=L\}}\,,\\
 r(W)&:=\babs{\{i\in[m]\,:\, W_i=R\}}\quad\text{and}\\
 b(W)&:=\babs{\{i\in[m]\,:\, W_i=B\}}\,.
 \end{align*}
We call two words $W=(W_1,\dotsc,W_m), V=(V_1,\dotsc,V_m)\in\{L,R,B\}^m$ \textit{equivalent}, and write $W\sim V$, if $\chi(W)=\chi(V)$. For each $m\geq1$ this clearly defines an equivalence relation on the set of words of length $m$ and $W\sim V$ if and only if there is a permutation $\pi$ of $[m]$ such that $V=(W_{\pi(1)},\dotsc,W_{\pi(m)})$. In what follows we will write $\overline{W}$ for the equivalence class of the word $W$. 
Note that the random quantity $K\bigl(D^{[W]}_{z_1,\dotsc,z_k}(F,G)\bigr)$ is either equal to $0$, if $l(W)+b(W)>p$ or if $r(W)+b(W)>q$, or else is a product of quantities of the type $(p)_{p-s}I_s(u)$ and $(q)_{q-r}I_t(v)$, where $I_s(u)$ and 
$I_t(v)$ are two multiple integrals, $s=p-l(W)-b(W)$, $t=q-r(W)-b(W)$ and the kernels $u\in L^2(\mu^s)$ and $v\in L^2(\mu^t)$ depend on the variables $z_1,\dotsc,z_k$ (as usual, we use the notation $(n)_m=n(n-1)\cdot\ldots\cdot (n-m+1)=\frac{n!}{(n-m)!}$ to indicate the \textit{falling factorial}, defined for integers $0\leq m\leq n$).

\smallskip

 \noindent Note that, by orthogonality of multiple integrals of different orders, we have
\begin{equation*}
 \E[I_s(u)I_t(v)]=\delta_{s,t}s!\int_{\mathcal{Z}^s} \tilde{u}(x_1,\dotsc,x_s) \tilde{v}(x_1,\dotsc,x_s)d\mu^s(x_1,\dotsc,x_s)\,.
\end{equation*}
In particular, $\E[I_s(u)I_t(v)]\not=0$ only if $s=t$.

\smallskip

\noindent According to these facts, let us fix $0\leq k\leq p+q$ as well as a word 
$W=(W_1,\dotsc,W_k)\in\{L,R,B\}^k$ of characteristic $\chi(W)=(l,r,b)$ such that $p\geq l+b$, $q\geq r+b$ and $p-l=q-r$. Note that we have the identity $k=l+r+b$. We now aim at expressing 
\begin{equation*}
 h^{W}(z_1,\dotsc,z_k):=\frac{1}{k!}\sum_{V\in\Wbar}\E\Bigl[K\bigl(D^{[V]}_{z_1,\dotsc,z_k}(F,G)\bigr)\Bigr]
\end{equation*}
in a more explicit way. Firstly, it is clear from the definitions and from \eqref{Dm} that $h^W$ is a symmetric function of the variables $z_1,\dotsc,z_k\in \mathcal{Z}$. Indeed, we claim that 
\begin{align}\label{hw}
 h^W=\frac{p!q!}{l! r! b!(p-l-b)!}\widetilde{\bigl(f\star_{p-l}^{p-l-b}g\bigr)}
\end{align}
In order to see this, for $\sigma,\pi \in\mathbb{S}_k$, we define the following relation: Let 
\begin{align*}
 B_1&:=\{1,\dotsc,b\}\,,\quad B_2:=\{b+1,\dotsc,b+l\}\quad\text{and}\\
 B_3&:=\{b+l+1,\dotsc,b+l+r=k\}\,.
\end{align*}
Then, we write $\sigma\approx\pi$ if and only if $(\sigma\circ\pi^{-1})(B_j)=B_j$ for all $j=1,2,3$. Equivalently, $\sigma\approx\pi$ if, and only if, for all $i\in[k]$ and all $j=1,2,3$ it holds that 
$\sigma(i)\in B_j\Leftrightarrow\pi(i)\in B_j$. This clearly defines an equivalence relation on $\mathbb{S}_k$. It is easily checked that for $\sigma\in\mathbb{S}_k$ its equivalence class $[\sigma]$ has cardinality
\begin{equation*}
 \abs{[\sigma]}=\abs{B_1}!\cdot \abs{B_2}!\cdot\abs{B_3}!=b! l! r!
\end{equation*}
and, hence, there are exactly $m:=\frac{k!}{b! l! r!}$ equivalence classes. Let $\sigma_1,\dotsc,\sigma_m$ be a complete system of representatives for the relation $\approx$. It is easy to see that 
\begin{align}\label{coneq1}
 \widetilde{\bigl(f\star_{p-l}^{p-l-b}g\bigr)}(z_1,\dotsc,z_k)&=\frac{1}{k!}\sum_{\sigma\in\mathbb{S}_k}\bigl(f\star_{p-l}^{p-l-b}g\bigr)(z_{\sigma(1)},\dotsc,z_{\sigma(k)})\notag\\
&=\frac{b!r!l!}{k!}\sum_{i=1}^m \bigl(f\star_{p-l}^{p-l-b}g\bigr)(z_{\sigma_i(1)},\dotsc,z_{\sigma_i(k)})\,.
\end{align}
Now let $a:\{L,R,B\}\rightarrow\{0,b,b+l\}$ be defined by $a(B):=0$, $a(L):=b$ and $a(R):=b+l$. For each $V\in\Wbar$ we define a permutation $\sigma_V\in\mathbb{S}_k$ as follows: For $i\in\{1,\dotsc,k\}$ let 
\[n_i:=\babs{\{j\in\{1,\dotsc,i\}\,:\, V_j=V_i\}}\]
and define $\sigma_V(i):=a(V_i)+n_i$. It is easy to see that $\sigma_V$ is indeed a permutation on $\{1,\dotsc,k\}$ and that the mapping $V\mapsto[\sigma_V]$ is a bijection from $\Wbar$ to the set of equivalence classes with respect to $\approx$. Moreover, note that from the isometry formula for multiple integrals, for each $V\in\Wbar$ we have 
\begin{align*}
\E\Bigl[K\bigl(D^{[V]}_{z_1,\dotsc,z_k}(F,G)\bigr)\Bigr]&=(p)_{l+b}(q)_{r+b}(p-l-b)!\bigl(f\star_{p-l}^{p-l-b}g\bigr)(z_{\sigma_V(1)},\dotsc,z_{\sigma_V(k)})\\
&=\frac{(p-l-b)!p!q!}{(p-l-b)!(q-r-b)!}
\bigl(f\star_{p-l}^{p-l-b}g\bigr)(z_{\sigma_V(1)},\dotsc,z_{\sigma_V(k)})\\
&=\frac{p! q!}{(p-l-b)!}
\bigl(f\star_{p-l}^{p-l-b}g\bigr)(z_{\sigma_V(1)},\dotsc,z_{\sigma_V(k)})\,,
\end{align*}
where we have used that $p-l-b=q-r-b$.
Together with \eqref{coneq1} this gives 
\begin{align}\label{coneq2}
\widetilde{\bigl(f\star_{p-l}^{p-l-b}g\bigr)}(z_1,\dotsc,z_k)&=\frac{b!r!l!}{k!}\sum_{i=1}^m \bigl(f\star_{p-l}^{p-l-b}g\bigr)(z_{\sigma_i(1)},\dotsc,z_{\sigma_i(k)})\notag\\
&=\frac{b!r!l!(p-l-b)!}{p!q!k!}\sum_{V\in\Wbar}\E\Bigl[K\bigl(D^{[V]}_{z_1,\dotsc,z_k}(F,G)\bigr)\Bigr]\notag\\
&=\frac{b!r!l!(p-l-b)!}{p!q!}h^W(z_1,\dotsc,z_k)\,,
\end{align}
proving \eqref{hw}. Finally, observing that the characteristic $\chi(W)=(l(W),r(W),b(W))$ of a word $W=(W_1,\dotsc,W_k)$ of length $k$ is determined by $l(W)$ and $b(W)$ (since $r(W)=k-l(W)-b(W)$) we obtain that 
\begin{align*}
h_k&=\sum_{b=0}^{p\wedge q\wedge k}\sum_{l=0}^{p-b}\1_{\{k-l-b=q-(p-l)\}}\frac{p!q!}{l! (q-p+l)! b!(p-l-b)!}\widetilde{\bigl(f\star_{p-l}^{p-l-b}g\bigr)}\notag\\
&=\sum_{b=0}^{p\wedge q\wedge k}\sum_{s=b}^{p\wedge q}\1_{\{k+s-p-b=q-s\}}\frac{p!q!}{(p-s)! (q-s)! b!(s-b)!}\widetilde{\bigl(f\star_{s}^{s-b}g\bigr)}\,.
\end{align*}
Finally, for $m=0,1,\dotsc,2(p\wedge q)$ and with the change of variable $k=p+q-m$ we can rewrite this as 
\begin{align}\label{hpqm2}
h_{p+q-m}&=\sum_{s=\lceil\frac{m}{2}\rceil}^{m\wedge p\wedge q} \frac{p!q!}{(p-s)! (q-s)! (2s-m)!(m-s)!}\widetilde{\bigl(f\star_{s}^{m-s}g\bigr)}\,,
\end{align}
proving the forward implication of the theorem.

For the converse we make use of Lemma \ref{l2lemma} with $F$ replaced by $FG\in L^1(\P)$ and with $M=p+q$. Indeed, it is easy to see from \eqref{prodrule} that condition (a) of Lemma \ref{l2lemma} is satisfied with this choice of $M$. Moreover, the first part of condition (b) follows from the observation made in the beginning of this proof and the second part holds true by the assumptions on the kernels $h_{p+q-m}$, $m=0,\dotsc, 2(p\wedge q)$ and by a combination of the identities \eqref{hk} and \eqref{hpqm2}.

\end{proof}

\normalem
\bibliography{product}{}
\bibliographystyle{alpha}
\end{document}